\documentclass[letterpaper,11pt,reqno]{amsart}

\makeatletter
\usepackage{amssymb}
\usepackage{latexsym}
\usepackage{amsbsy}
\usepackage{amsfonts}
\usepackage{hyperref}
\usepackage{caption}
\usepackage{scalerel}
\usepackage{subcaption}
\usepackage{graphicx}
\usepackage{enumerate}
\usepackage{enumitem}
\usepackage{mathtools}
\usepackage{color}

\usepackage{tikz}

\def\marginpar#1{\ignorespaces}

\textheight=600pt \textwidth=440pt \oddsidemargin=10pt \evensidemargin=10pt \topmargin=14pt
\headheight=8pt
\parindent=0pt
\parskip=2pt

\DeclareMathOperator\card{card}

\newtheorem{theorem}{Theorem}[section]

\newtheorem{proposition}[theorem]{Proposition}

\newtheorem{definition}[theorem]{Definition}

\numberwithin{equation}{section}
\makeatother
\begin{document}
\title[Weighted exchangeability]{Finite and infinite weighted exchangeable sequences}

\author[Wenpin Tang]{{Wenpin} Tang}
\address{Department of Industrial Engineer and Operations Research, Columbia University. 
} \email{wt2319@columbia.edu}

\date{\today} 
\begin{abstract}
Motivated by recent interests in predictive inference under distribution shift,
we study the problem of approximating finite weighted exchangeable sequences by 
a mixture of finite sequences with independent terms. 
Various bounds are derived in terms of weight functions,
extending previous results on finite exchangeable sequences. 
As a byproduct, 
we obtain a version of de Finetti's theorem for
infinite weighted exchangeable sequences.
\end{abstract}

\maketitle

\textit{Key words}: Finite/infinite exchangeability, weighted exchangeability. 

\textit{AMS 2020 Mathematics Subject Classification: 60G09, 28A33.}


\section{Introduction}

\quad Exchangeability is a fundamental concept in 
probability theory and statistics, 
right next to the notion of independent and identically distributed (i.i.d.) random objects. 
It features a variety of research topics, including
exchangeable random graphs and networks \cite{BC09, DJ08, OR14},
random walk with exchangeable increments \cite{SA53, PT23}
exchangeable random partitions and clustering \cite{CCB18, Pit95},
and Bayesian inference \cite{HW23, KD18, Rubin78}, just to name a few. 
Perhaps the best known result is de Finetti's representation of 
an infinite exchangeable binary sequence \cite{deF29}: 
an exchangeable sequence $X_1, X_2, \ldots \in \{0,1\}$ 
can be constructed by:
\begin{equation*}
\mbox{sample } p \sim \mu, \mbox{ then draw } X_1, X_2, \ldots \stackrel{i.i.d.}{\sim} \mbox{ Bernoulli}(p),
\end{equation*}
for some distribution $\mu$ on $[0,1]$. 
In words, any infinite exchangeable binary sequence is a mixture of i.i.d. Bernoulli sequences. 
The result was extended by Hewitt and Savage \cite{HS55} to a general space $\mathcal{X}$. 
See \cite{Aldous85, Kallen05} for a comprehensive review on the topic of exchangeability.

\quad Recently, \cite{BC23, TR19} proposed the notion of {\em weighted exchangeability} 
to deal with covariate shift in predictive inference. 
Roughly speaking, the sequence $X_1, \ldots, X_n$ on $\mathcal{X}$ is weighted exchangeable with respect to 
the weight functions $\lambda_1, \ldots, \lambda_n$ if 
the probability mass/density function $f_n$ (if $\mathcal{X}$ is discrete/continuous) 
can be factorized as:
\begin{equation}
\label{eq:factor}
f_n(x_1, \ldots, x_n) = \left(\prod_{i=1}^n \lambda_i(x_i)\right) g_n(x_1, \ldots, x_n),
\end{equation}
where $g_n: \mathcal{X}^n \to (0,\infty)$ is symmetric, 
i.e. $g_n(x_1, \ldots, x_n) = g_n(x_{\sigma(1)}, \ldots, x_{\sigma(n)})$.
For instance, if $\lambda_i \equiv 1$ then we are in the exchangeable setting. 
The form \eqref{eq:factor} also appeared
in the context of extremal families of a statistical model,
where $\mathcal{X} = \{0,1\}$ \cite{Lau88}.
See Definition \ref{def:weightex} for details, 
and an (obvious) extension to infinite sequences.  

\quad Among many possible questions on weighted exchangeability,
a natural first one is whether there is a de Finetti-like theorem for weighted exchangeable sequences.
That is, can a weighted exchangeable sequence be represented or approximated 
by a mixture of weighted i.i.d. (i.e. independent but not necessarily identically distributed) sequences?
Intuitively, the weight functions $\lambda_1, \lambda_2, \ldots$ should play a role 
in the answer to this question.
\begin{itemize}[itemsep = 3 pt]
\item
In \cite{Lau88}, it was proved that for $\mathcal{X} = \{0,1\}$,
de Finetti's theorem for infinite weighted exchangeable sequences holds {\em if and only if}
\begin{equation}
\sum_{i = 1}^\infty \frac{\min (\lambda_i(0), \lambda_i(1))}{\max (\lambda_i(0), \lambda_i(1))} = \infty.
\end{equation}
\item
In \cite{BC23}, it was shown that for a fairly general $\mathcal{X}$, 
de Finetti's theorem for infinite weighted exchangeable sequences holds {\em if}
\begin{equation}
\label{eq:gencond0}
\sum_{i = 1}^\infty \frac{\inf_{\mathcal{X}} \lambda_i(x)}{\sup_{\mathcal{X}} \lambda_i(x)} = \infty.
\end{equation}
It is unknown whether the condition \eqref{eq:gencond0} is necessary except when $\mathcal{X} = \{0,1\}$.
\end{itemize}

\quad The proofs in \cite{BC23} rely heavily on  measure theoretical machinery. 
This paper aims to provide a ``quantitative" approach to de Finetti's representation of
weighted exchangeable sequences,
and the perspective is different from \cite{BC23, Lau88}.
To this end, we consider {\em finite} weighted exchangeable sequences 
in the spirit of Diaconis and Freedman \cite{DF80}.
It is known that 
a finite (weighted) exchangeable sequence is not a mixture of finite (weighted) i.i.d. sequences,
and the goal is to understand
how close the former can be approximated  by the latter. 
We will show how the arguments in \cite{DF80},
which essentially involve estimating the difference between sampling with and without replacement,
can be used to derive bounds in terms of the weight functions. 
As a corollary, 
we obtain a version of de Finetti's theorem for infinite weighted exchangeable sequences
by taking limits. 
We would point out that 
the problem of quantifying the approximation of 
finite weighted exchangeable sequences by a mixture of weighted i.i.d. sequences 
is of independent interest.

\quad The rest of the paper is organized as follows. 
In Section \ref{sc2}, 
we provide background on weighted exchangeability,
and the results are presented in Section \ref{sc3}.

\section{Background on weighted exchangeability}
\label{sc2}

\quad Throughout this paper, we consider random variables on a Polish space $\mathcal{X}$, 
equipped with the Borel $\sigma$-algebra $\mathcal{B} = \mathcal{B}(\mathcal{X})$.  
Also define $\mathcal{X}^n$ and $\mathcal{X}^{\infty}$ to be 
the $n$-fold and countably infinite product spaces,
with respective Borel $\sigma$-algebras 
$\mathcal{B}^n = \mathcal{B}(\mathcal{X})^n$ and $\mathcal{B}^{\infty} = \mathcal{B}(\mathcal{X})^{\infty}$.
Denote by $\Lambda$ the set of measurable functions $\mathcal{X}$ to $(0,\infty)$,
and $\Lambda^n$ and $\Lambda^{\infty}$ the corresponding $n$-fold and countably infinite product spaces.
Let $\mathfrak{S}_n$ be the set of permutations on $[n]:= \{1,\ldots, n\}$,
and for $\sigma \in \mathfrak{S}_n$, we denote
$\sigma[i,j]:= (\sigma(i), \ldots, \sigma(j))$ for $1 \le i \le j \le n$.

\quad Below we define the notion of weighted exchangeability for a sequence of random variables. 

\begin{definition} 
\label{def:weightex}
\cite{BC23, TR19}
\begin{enumerate}[itemsep = 3 pt]
\item
(Finite weighted exchangeability).
Let $n$ be a positive integer, 
and $P_n$ be the distribution of $(X_1, \ldots X_n) \in \mathcal{X}^n$.
We say that $P_n$ is weighted exchangeable if 
there is $\lambda = (\lambda_1, \ldots, \lambda_n) \in \Lambda^n$ such that 
the measure $\overline{P}_n$ defined by 
\begin{equation}
\label{eq:changem}
\overline{P}_n(B) = \int_B \frac{dP_n(x_1, \ldots, x_n)}{\prod_{i = 1}^n \lambda_i(x_i)}, \quad \mbox{for } B \in \mathcal{B}^n,
\end{equation}
 is an exchangeable probability measure. 
The sequence $(X_1, \ldots X_n)$,
or the distribution $P_n$ is called $\lambda$-exchangeable.
\item
(Infinite weighted exchangeability).
Let $P$ be the distribution of $(X_1, X_2, \ldots) \in \mathcal{X}^{\infty}$,
and $P_n$ be the marginal measure of $P$ on $(\mathcal{X}^n, \mathcal{B}^n)$ defined by
$P_n(B): = P(B \times \mathcal{X} \times \mathcal{X} \times \ldots)$ for $B \in \mathcal{B}^n$.
We say that $P$ is weighted exchangeable if 
there is $\lambda = (\lambda_1, \lambda_2, \ldots) \in \Lambda^{\infty}$ such that
for each $n \ge 1$, the distribution $P_n$ is $\lambda_n$-exchangeable,
where  $\lambda_n: = (\lambda_1, \ldots, \lambda_n) \in \Lambda^n$.
The sequence $(X_1, X_2, \ldots)$, or the distribution $P$ is called $\lambda$-exchangeable.
\end{enumerate}
\end{definition}

\quad Let's make a few remarks.
First, Definition \ref{def:weightex} is a slight adaptation of \cite[Definition 3 \& 4]{BC23},
where the notion of weighted exchangeability is defined for general measures 
in the following obvious way.
A measure $Q_n$ on $(\mathcal{X}^n, \mathcal{B}^{n})$ is called $\lambda$-exchangeable if the measure $\overline{Q}_n$ defined by 
\begin{equation*}
\overline{Q}_n(B) = \int_B \frac{dQ_n(x_1, \ldots, x_n)}{\prod_{i = 1}^n \lambda_i(x_i)}, \quad \mbox{for } B \in \mathcal{B}^n,
\end{equation*}
is exchangeable in the sense that 
$\overline{Q}_n(B_1 \times \ldots \times B_n) = \overline{Q}_n(B_{\sigma(1)} \times \ldots \times B_{\sigma(n)})$
for any $B_1, \ldots, B_n \in \mathcal{B}$ and any permutation $\sigma \in \mathfrak{S}_n$.
Similarly, a measure $Q$ on $(\mathcal{X}^{\infty}, \mathcal{B}^{\infty})$ is $\lambda$-exchangeable if
for each $n$, its marginal measure $Q_n$ is $\lambda_n$-exchangeable. 
Here we focus on weighted exchangeable {\em probability measures},
which are useful for practical purposes.
Most results can be easily extended to finite measures.

\quad Second, the key ingredient \eqref{eq:changem} in the definition of weighted exchangeability 
is basically the factorization \eqref{eq:factor}.
In other words, 
weighted exchangeable sequences can be viewed as 
a change of measure of exchangeable sequences,
with {\em separable} Radon-Nikodym derivatives.

\quad Finally, the weights $\lambda$ are not unique for a weighted exchangeable sequence.
For instance, consider $(X_1, X_2)$ with density
$f(x_1, x_2) = x_1 x_2$ on $[0,1]^2$.
Note that $x_1 x_2 = x_1 \cdot x_2 \cdot 1 = 1 \cdot 1 \cdot x_1 x_2$,
so $(X_1, X_2)$ is $\lambda$-exchangeable
with $\lambda_1(x) = \lambda_2(x) \equiv 1$ or $\lambda_1(x) = \lambda_2 (x) = x$.
For $\lambda \in \Lambda^n$ and $\theta = (\theta_1, \ldots, \theta_n) \in (0, \infty)^n$,
define $\lambda_\theta: = \theta \cdot \lambda = (\theta_1 \lambda_1, \ldots, \theta_n \lambda_n)$.
Obviously, if $P_n$ is $\lambda$-exchangeable, 
then $P_n$ is  $\lambda_\theta$-exchangeable provided $\prod_{i = 1}^n \theta_i = 1$.
Thus, the notion of $\lambda$-exchangeability is defined up to equivalent classes on $\lambda$. 

\quad de Finetti's theorem states that any infinite exchangeable sequence 
is a mixture of i.i.d. sequences (sampling with replacement),
see \cite[Section 3]{Aldous85}, \cite[Theorem 1.1]{Kallen05}.
A finite exchangeable sequence 
needs not be a mixture of i.i.d. sequences; 
instead it is a mixture of urn sequences (sampling without replacement),
see \cite{DF80}, \cite[Proposition 1.8]{Kallen05}.
Recall that an element in a convex set is called {\em extreme} if 
it cannot be expressed as a strict mixture of the elements in the set.
The following proposition records some useful results on 
finite weighted exchangeable sequences. 

\begin{proposition}
\label{prop:finiteexrep}
Let $\lambda  \in \Lambda^n$, and $(X_1, \ldots, X_n) \in \mathcal{X}^n$ be $\lambda$-exchangeable.
\begin{enumerate}[itemsep = 3 pt]
\item
The conditional distribution of $(X_1, \ldots, X_n)$ given the empirical measure $\widehat{P}_n:= \frac{1}{n}\sum_{i = 1}^n \delta_{X_i}$
(or its possibly repeated values $\mathcal{U}_X:=\{X_1, \ldots, X_n\}$) is 
\begin{equation}
\label{eq:conddist}
(X_1, \ldots, X_n  \,|\,  \widehat{P}_n) 
\stackrel{d}{=} \sum_{\sigma} \frac{\prod_{i = 1}^n \lambda_i(X_{\sigma(i)})}{\sum_\sigma \prod_{i = 1}^n \lambda_i(X_{\sigma(i)})} \delta_{X_{\sigma(1)}, \ldots, X_{\sigma(n)}},
\end{equation}
where $\sigma \in \mathfrak{S}_n$ and the sum is over all permutations in $\mathfrak{S}_n$.
Denote by $P_{\mathcal{U}_X, n}$ the conditional distribution given by \eqref{eq:conddist}.
\item
The set of $\lambda$-exchangeable sequences is convex,
and each extreme $\lambda$-exchangeable distribution is of form \eqref{eq:conddist}
for some non-random empirical measure $\frac{1}{n}\sum_{i = 1}^n \delta_{x_i}$.
Consequently, the distribution of $(X_1, \ldots, X_n)$ is a unique mixture of extreme points:
\begin{equation}
P_n(\cdot) = \int w_{\scaleto{\mathcal{U}_x}{5 pt}} P_{\mathcal{U}_x, n}(\cdot)  \, d \, \mathcal{U}_x, \quad 
\mbox{with } w_{\scaleto{\mathcal{U}_x}{5 pt}} \ge 0 \mbox{ and } \int w_{\scaleto{\mathcal{U}_x}{5 pt}} d \, \mathcal{U}_x= 1.
\end{equation}
\end{enumerate}
\end{proposition}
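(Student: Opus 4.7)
The plan is to prove part (1) by transporting the conditioning through the change of measure \eqref{eq:changem}, and then to deduce part (2) by combining the resulting disintegration with a support argument for extremality. The main (mild) difficulty is careful bookkeeping of regular conditional distributions on the Polish space $\mathcal{X}$, which is standard in this setting.

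For part (1), recall that $\overline{P}_n$ is exchangeable. A classical fact for exchangeable distributions is that conditional on the empirical measure $\widehat{P}_n$ (equivalently, on the multiset $\mathcal{U}_X$), the sequence $(X_1, \ldots, X_n)$ is uniformly distributed over the $n!$ orderings of $\mathcal{U}_X$; in other words, for any bounded measurable $h$,
\[
\mathbb{E}_{\overline{P}_n}\bigl[h(X_1, \ldots, X_n)\,\big|\, \widehat{P}_n\bigr] \;=\; \frac{1}{n!}\sum_{\sigma \in \mathfrak{S}_n} h(X_{\sigma(1)}, \ldots, X_{\sigma(n)}).
\]
Since $P_n$ has Radon-Nikodym derivative $\prod_{i=1}^n \lambda_i(x_i)$ with respect to $\overline{P}_n$, Bayes' rule yields
\[
\mathbb{E}_{P_n}\bigl[h(X_1, \ldots, X_n)\,\big|\, \widehat{P}_n\bigr] \;=\; \frac{\mathbb{E}_{\overline{P}_n}\bigl[h(X_1, \ldots, X_n)\prod_i \lambda_i(X_i)\,\big|\,\widehat{P}_n\bigr]}{\mathbb{E}_{\overline{P}_n}\bigl[\prod_i \lambda_i(X_i)\,\big|\,\widehat{P}_n\bigr]}.
\]
Substituting the uniform-over-permutations identity into both numerator and denominator, the $1/n!$ cancels and one reads off exactly the weights of \eqref{eq:conddist}. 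Positivity of the denominator is ensured by $\lambda_i \in \Lambda$, i.e.\ $\lambda_i > 0$.

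For part (2), convexity of the $\lambda$-exchangeable class is immediate: \eqref{eq:changem} is linear in $P_n$, so a convex combination of $\lambda$-exchangeable measures transports to a convex combination of exchangeable measures, which remains exchangeable. Part (1) already furnishes the announced representation: denoting by $\nu$ the law of $\mathcal{U}_X$ under $P_n$, disintegration along the empirical-measure map $(x_1,\ldots,x_n) \mapsto \tfrac{1}{n}\sum_i \delta_{x_i}$ gives
\[
P_n(\cdot) \;=\; \int P_{\mathcal{U}_x, n}(\cdot)\, d\nu(\mathcal{U}_x),
\]
so setting $w_{\mathcal{U}_x}\, d\mathcal{U}_x = d\nu(\mathcal{U}_x)$ yields the desired mixture.

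It remains to identify the extreme points. For a fixed multiset $\mathcal{U}_x = \{x_1, \ldots, x_n\}$, the measure $P_{\mathcal{U}_x, n}$ is itself $\lambda$-exchangeable: direct verification from \eqref{eq:conddist} shows that its image under \eqref{eq:changem} is the uniform law on the $n!$ orderings of $\mathcal{U}_x$. If $P_{\mathcal{U}_x, n} = \alpha P' + (1-\alpha) P''$ with $\alpha \in (0,1)$ and $P', P''$ both $\lambda$-exchangeable, then $P'$ and $P''$ are supported on the finite set of orderings of $\mathcal{U}_x$, so their empirical measure equals $\tfrac{1}{n}\sum_i \delta_{x_i}$ almost surely; applying part (1) to $P'$ and $P''$ forces $P' = P'' = P_{\mathcal{U}_x, n}$, giving extremality. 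Conversely, any $\lambda$-exchangeable law whose induced $\nu$ is not a Dirac mass is displayed above as a non-trivial mixture of such $P_{\mathcal{U}_x, n}$, hence not extreme. Uniqueness of the mixture finally follows because $\nu$ is uniquely recovered as the pushforward of $P_n$ under the empirical-measure map.
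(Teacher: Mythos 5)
Your proof is correct, and it is genuinely more self-contained than the paper's: the paper proves part (1) by citing \cite[Proposition 6]{BC23} (stating that the joint formula is ``an easy extension'' of the marginal one) and part (2) by citing \cite[Proposition 2]{BC23} for convexity and the extreme-point machinery of \cite[Proposition 1.4 \& 1.8]{Kallen05}, whereas you derive everything from first principles. Your route for (1) --- abstract Bayes' rule through the change of measure \eqref{eq:changem} combined with the classical fact that an exchangeable law is conditionally uniform over orderings given $\widehat{P}_n$ --- is exactly the mechanism underlying the cited result, and your treatment of (2) via disintegration along the empirical-measure map plus the support argument for extremality is a clean instantiation of Kallenberg's general machinery. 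One small caveat: the image of $P_{\mathcal{U}_x,n}$ under \eqref{eq:changem} is \emph{proportional to}, not equal to, the uniform law on the $n!$ orderings (its total mass is $n!/\sum_\sigma \prod_i \lambda_i(x_{\sigma(i)})$), so $P_{\mathcal{U}_x,n}$ is $\lambda$-exchangeable only in the scaling-equivalence sense that the paper itself adopts when it remarks that $\lambda$-exchangeability is defined up to the classes $\lambda_\theta$; this does not affect your extremality argument, but the normalization should be acknowledged. Everything else --- integrability of $\prod_i\lambda_i(X_i)$ under $\overline{P}_n$ (automatic since it is the Radon--Nikodym derivative of one probability measure with respect to another), injectivity of $\nu \mapsto \int P_{\mathcal{U}_x,n}\,d\nu$ for uniqueness, and the absolute-continuity step forcing $P'$ and $P''$ onto the orderings of $\mathcal{U}_x$ --- is sound.
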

\begin{proof}
(1) It was proved in \cite[Proposition 6]{BC23}  
the conditional (marginal) distribution of $X_i$ given $\frac{1}{n}\sum_{i = 1}^n \delta_{X_i}$ is
\begin{equation}
\label{eq:margdist}
(X_i \,|\, \widehat{P}_n ) \stackrel{d}{=} 
\sum_{j= 1}^n \frac{\sum_{\sigma(i) = j} \prod_{i = 1}^n \lambda_i(X_{\sigma(i)})}{\sum_\sigma \prod_{i = 1}^n \lambda_i(X_{\sigma(i)})} \delta_{X_j},
\end{equation}
and the formula \eqref{eq:conddist} is an easy extension that can be read from the proof. 
Also note that the joint distribution \eqref{eq:conddist} is consistent with \eqref{eq:margdist}
by summing over all permutations $\sigma \in \mathfrak{S}_n$ with $\sigma(i) = j$.

(2) The fact that the set of $\lambda$-exchangeable distributions is convex is straightforward from Definition \ref{def:weightex},
see \cite[Proposition 2]{BC23}. 
The characterization of extreme $\lambda$-exchangeable distributions follows from 
the general machinery as in \cite[Proposition 1.4 \& 1.8]{Kallen05}.
\end{proof}

\quad Several remarks are in order.
First, it is clear from the formula \eqref{eq:conddist} that the conditional distribution of $(X_1, \ldots, X_k)$ for $k \le n$
given $\frac{1}{n}\sum_{i = 1}^n \delta_{X_i}$ is
\begin{equation}
\label{eq:conddist2}
\begin{aligned}
& (X_1, \ldots, X_k  \,|\, \widehat{P}_n )  \\
& \qquad \qquad \stackrel{d}{=}  \sum_{j_1, \ldots, j_k}\frac{\sum_{\sigma(1) = j_1, \ldots, \sigma(k) = j_k}\prod_{i = 1}^n \lambda_i(X_{\sigma(i)})}{\sum_\sigma \prod_{i = 1}^n \lambda_i(X_{\sigma(i)})}
\delta_{X_{j_1}, \ldots, X_{j_k}}, \\
& \qquad \qquad \stackrel{d}{=} \sum_{j_1, \ldots, j_k} \left(\prod_{i = 1}^k \lambda_i(X_{j_i})\right) \frac{\sum_{\sigma[k+1,n] = [n] \setminus \{j_1, \ldots, j_k\}}\prod_{i = k+1}^n \lambda_i(X_{\sigma(i)})}{\sum_\sigma \prod_{i = 1}^n \lambda_i(X_{\sigma(i)})}
\delta_{X_{j_1}, \ldots, X_{j_k}},
\end{aligned}
\end{equation}
where $j_1,\ldots, j_k$ take $k$ distinct values from $[n]$.
Denote by $\mathcal{P}_{\mathcal{U}_X, k}$ the conditional distribution in \eqref{eq:conddist2}.
There are $(n)_k: = n!/(n-k!)$ terms in the sum $\sum_{j_1, \ldots, j_k}$,
and $(n-k)!$ terms in $\sum_{\sigma[k+1,n] = [n] \setminus \{j_1, \ldots, j_k\}}$.
The second part of Proposition \ref{prop:finiteexrep} implies that
each extreme point of the $k$-marginal $\lambda$-exchangeable distributions 
is of form $\mathcal{P}_{\mathcal{U}_x, k}$ for some non-random $\frac{1}{n} \sum_{i = 1}^n \delta_{x_i}$.
This way, the formula \eqref{eq:conddist2} will be very useful in deriving an error bound
between the distribution of $(X_1, \ldots, X_k)$ and its closest mixture of i.i.d. sequences. 

\quad Second, if $\lambda_i \equiv 1$ for all $i$ (exchangeable), then the formulas \eqref{eq:conddist} and \eqref{eq:conddist2} specialize to
\begin{equation}
\label{eq:conddist3}
(X_1, \ldots, X_k  \,|\,  \widehat{P}_n ) \stackrel{d}{=} \frac{1}{(n)_k} \sum_{j_1, \ldots, j_k} \delta_{X_{j_1}, \ldots, X_{j_k}}, \quad k \le n,
\end{equation}
which is exactly sampling without replacement from the ``urn" $\mathcal{U}_X$.
So $(X_1, \ldots, X_n)$ given the urn $\mathcal{U}_X$ can be sampled sequentially: 
pick $X_{j_1}$ uniformly at random from $\mathcal{U}_X$,
and pick $X_{j_2}$ uniformly at random from $\mathcal{U}_X \setminus \{X_{j_1}\}$, and so on. 
But for general weights $\lambda$, 
there does not seem to be an easy way to sample $(X_1, \ldots, X_n)$ from $\mathcal{U}_X$.

\quad Finally, the extreme points of $k$-marginal exchangeable distributions can be made explicit. 
By \cite[Theorem 4.5]{CFV23}, 
each extreme point is of form
\begin{equation}
\label{eq:expansion}
\frac{n^k}{(n)_k} \widehat{P}_n^{\otimes} + \varepsilon_{n,k}(\widehat{P}_n)
= \widehat{P}_n^{\otimes} + \left( \left(\frac{n^k}{(n)_k} - 1 \right) \widehat{P}_n^{\otimes}  + \varepsilon_{n,k}(\widehat{P}_n)\right),
\end{equation}
where $\varepsilon_{n,k}(\widehat{P}_n)$ is a ``polynomial" involving alternating sums of 
pushforward and symmetrization of $\widehat{P}_n$ (see \cite[(4.10)]{CFV23}).
The term $\left(\frac{n^k}{(n)_k} - 1 \right) \widehat{P}_n^{\otimes}  + \varepsilon_{n,k}(\widehat{P}_n)$
gives the difference between the distribution \eqref{eq:conddist3}
and $k$ i.i.d. samples from the empirical measure $\widehat{P}_n$.
It is also possible to derive a formula analogous to \eqref{eq:expansion}
for weighted exchangeable distributions,
but we do not pursue it here and leave it to the readers. 

\section{Main results}
\label{sc3}

\quad We present the results on 
approximating finite weighted exchangeable sequences
by a mixture of weighted i.i.d. (or independent but not identically distributed) sequences. 
Recall that the total variation distance between two distributions $P$ and $Q$ is 
$|P - Q|_{TV}: = \sup_{A} |P(A) - Q(A)|$.

\begin{theorem}
\label{thm:general}
Let $P_n$ be the distribution of a $\lambda$-exchangeable sequence $(X_1, \ldots, X_n)$ on $\mathcal{X}$,
and $P_k$ be the marginal distribution of $(X_1, \ldots, X_k)$ for $k \le n$.
Assume that
\begin{equation}
\label{eq:assumpr}
r_i: = \frac{\inf_{\mathcal{X}} \lambda_i(x)}{\sup_{\mathcal{X}} \lambda_i(x)} > 0, \quad \mbox{for } i \in [n].
\end{equation}
Then there exists a mixture of weighted i.i.d. distributions $Q_n$ on $\mathcal{X}^n$, 
with $Q_k$ the maringal distribution on its first $k$ coordinates such that
\begin{equation}
\label{eq:boundgen}
|P_k - Q_k|_{TV} \le 
\frac{k(k-1)}{2 n}\left(\prod_{i = 1}^k r_i \right)^{-1}.
\end{equation}
\end{theorem}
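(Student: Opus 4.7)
Following Diaconis--Freedman \cite{DF80}, I would work through the extremal representation of Proposition~\ref{prop:finiteexrep}. First, by Proposition~\ref{prop:finiteexrep}(2), write $P_n = \int w_{\mathcal{U}_x}\, P_{\mathcal{U}_x, n}\, d\mathcal{U}_x$ as a mixture of extreme $\lambda$-exchangeable distributions indexed by empirical measures. For each realization $\mathcal{U}_x = \{x_1,\ldots,x_n\}$, define
\[
Q_{\mathcal{U}_x, n}(dy_1\cdots dy_n) = \prod_{i=1}^n \sum_{j=1}^n \frac{\lambda_i(x_j)}{\Lambda_i}\,\delta_{x_j}(dy_i), \qquad \Lambda_i := \sum_{j=1}^n \lambda_i(x_j),
\]
to be $n$ independent weighted draws from the urn. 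The candidate mixture is $Q_n := \int w_{\mathcal{U}_x}\, Q_{\mathcal{U}_x, n}\, d\mathcal{U}_x$, which is a mixture of independent-coordinate sequences and hence a ``mixture of weighted i.i.d.'' distribution in the sense of the theorem.

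Passing to $k$-marginals and invoking the convexity of total variation, $|P_k - Q_k|_{TV} \le \int w_{\mathcal{U}_x}\, |P_{\mathcal{U}_x, k} - Q_{\mathcal{U}_x, k}|_{TV}\, d\mathcal{U}_x$. It therefore suffices to prove $|P_{\mathcal{U}_x, k} - Q_{\mathcal{U}_x, k}|_{TV} \le \frac{k(k-1)}{2n}(\prod_{i=1}^k r_i)^{-1}$ for each fixed $\mathcal{U}_x$. The central combinatorial ingredient is the collision probability of an i.i.d.\ weighted sample: if $(Y_1,\ldots,Y_k) \sim Q_{\mathcal{U}_x, k}$ then by a union bound,
\[
\mathbb{P}(\exists\, i<j:\, Y_i = Y_j) \le \sum_{1 \le i < j \le k}\sum_{\ell=1}^n \frac{\lambda_i(x_\ell)\lambda_j(x_\ell)}{\Lambda_i \Lambda_j} \le \sum_{i<j}\frac{1}{n r_i r_j} \le \frac{k(k-1)}{2n}\Big(\prod_{i=1}^k r_i\Big)^{-1},
\]
using $\lambda_i(x_\ell) \le \sup \lambda_i$, $\Lambda_i \ge n \inf \lambda_i$, and the monotonicity $1/(r_i r_j) \le 1/\prod_{i=1}^k r_i$ following from $r_m \in (0,1]$.

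The remaining step---translating this collision estimate into a bound on $|P_{\mathcal{U}_x, k} - Q_{\mathcal{U}_x, k}|_{TV}$---is where I anticipate the main obstacle. In the unweighted case of \cite{DF80}, $P_{\mathcal{U}_x, k}$ coincides with $Q_{\mathcal{U}_x, k}$ conditioned on injectivity, and the TV distance equals the collision probability; in the weighted case formula~\eqref{eq:conddist2} contains an extra factor $A_f := \sum_{\sigma \in \mathfrak{S}_n,\, \sigma|_{[k]} = f}\prod_{i>k}\lambda_i(x_{\sigma(i)})$ depending on the injective tuple $f$, so $P_{\mathcal{U}_x, k}$ is \emph{not} $Q_{\mathcal{U}_x, k}$-conditional on injectivity. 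I would address this via a sequential coupling: provided $(X_{<i},Y_{<i})$ have been coupled to agree, the conditional laws of $X_i$ under $P_{\mathcal{U}_x, k}$ and of $Y_i$ under $Q_{\mathcal{U}_x, k}$ differ in TV essentially only by the probability $(i-1)/(n r_i)$ that $Y_i$ lands on a previously chosen value, and telescoping $\sum_{i=1}^k (i-1)/(n r_i) \le \binom{k}{2}/(n\prod_{i=1}^k r_i)$ by the same $r_m \le 1$ argument as above yields the required bound. The technical heart of the proof is thus checking that the $A_f$-dependence of the conditional law of $X_i$ does not inflate the per-step TV beyond $(i-1)/(n r_i)$; a useful observation in this direction is that, conditional on the \emph{set} $\{f(1),\ldots,f(k)\}$, the ordering distributions under $P_{\mathcal{U}_x,k}$ and $Q_{\mathcal{U}_x,k}$ coincide, so the discrepancy is purely at the level of the induced distribution on $k$-subsets of $\{x_1,\ldots,x_n\}$, which should be amenable to the same collision-probability bound.
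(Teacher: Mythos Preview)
Your setup---the extremal decomposition of Proposition~\ref{prop:finiteexrep}, the definition of $Q_{\mathcal{U}_x,n}$ as independent $\lambda_i$-weighted draws from the urn, and the reduction to the urn-by-urn estimate $|P_{\mathcal{U}_x,k}-Q_{\mathcal{U}_x,k}|_{TV}$---is exactly the paper's, and your collision-probability bound coincides with what the paper derives for $1-Q_{\mathcal{U}_x,k}(\widetilde{\mathcal{U}}_x^k)$. Where you and the paper part ways is the link between the collision probability and the total variation. The paper asserts that $Q_{\mathcal{U}_x,k}(z)\le P_{\mathcal{U}_x,k}(z)$ for every injective $z$, from which $|P_{\mathcal{U}_x,k}-Q_{\mathcal{U}_x,k}|_{TV}=1-Q_{\mathcal{U}_x,k}(\widetilde{\mathcal{U}}_x^k)$ would follow directly. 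You instead (correctly) note that in the weighted case $P_{\mathcal{U}_x,k}$ is \emph{not} $Q_{\mathcal{U}_x,k}$ conditioned on injectivity, and propose a sequential coupling together with the (true) observation that, conditional on the unordered $k$-set, the two ordering laws agree.

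The genuine gap is that the urn-by-urn inequality you are trying to establish is false for this choice of $Q_{\mathcal{U}_x,n}$, so no coupling or set-level reduction can produce it. Take $n=2$, $k=1$, urn $\{a,b\}$, $\lambda_1\equiv 1$, $\lambda_2(a)=1$, $\lambda_2(b)=2$. Then
\[
P_{\mathcal{U}_x,1}(a)=\frac{\lambda_1(a)\lambda_2(b)}{\lambda_1(a)\lambda_2(b)+\lambda_1(b)\lambda_2(a)}=\frac{2}{3},
\qquad
Q_{\mathcal{U}_x,1}(a)=\frac{\lambda_1(a)}{\lambda_1(a)+\lambda_1(b)}=\frac{1}{2},
\]
so $|P_{\mathcal{U}_x,1}-Q_{\mathcal{U}_x,1}|_{TV}=\tfrac{1}{6}$, while the bound \eqref{eq:boundgen} at $k=1$ is $0$. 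Your sequential coupling would need the step-$1$ laws to coincide (your claimed per-step error $(i-1)/(n r_i)$ vanishes at $i=1$), which this example refutes; and for $k=1$ the ``set'' and ``tuple'' laws are identical, so the set-level reduction offers no escape. The same example shows that the paper's pointwise assertion $Q_{\mathcal{U}_x,k}\le P_{\mathcal{U}_x,k}$ on injective tuples does not hold in general (here $Q_{\mathcal{U}_x,1}(b)=\tfrac12>\tfrac13=P_{\mathcal{U}_x,1}(b)$). A correct argument along these lines would have to either modify $Q_{\mathcal{U}_x,n}$ or abandon the uniform urn-by-urn bound in favor of one that exploits the averaging over $\mathcal{U}_x$.
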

\begin{proof}
By Proposition \ref{prop:finiteexrep} (2), $P_n$ is of form
$P_n = \int w_{\scaleto{\mathcal{U}_x}{5 pt}} P_{\mathcal{U}_x, n} \, d \, \mathcal{U}_x$,
and $P_k = \int w_{\scaleto{\mathcal{U}_x}{5 pt}} P_{\mathcal{U}_x, k} \, d \, \mathcal{U}_x$.
Define the probability measure $Q_{\mathcal{U}_x, n}$ on $\mathcal{U}_x$ by
sampling $X_i$ according to $\frac{\lambda_i(\cdot)}{\sum_{j = 1}^n \lambda_i(x_j)}$,
independently for each $i$.
So 
\begin{equation}
\label{eq:condQ2}
Q_{\mathcal{U}_x, n} = \sum_{s(1), \ldots, s(n)} \left(\prod_{i=1}^n \frac{\lambda_i(x_{s(i)})}{\sum_{j = 1}^n 
\lambda_i(x_j)} \right) \delta_{x_{s(1)}, \ldots, x_{s(n)}},
\end{equation}
where the sum is over all $s(1), \ldots s(n) \in [n]$.
Clearly, $Q_{\mathcal{U}_x, n}$ is weighted i.i.d.
Let 
\begin{equation}
\label{eq:Qdef}
Q_n: = \int  w_{\scaleto{\mathcal{U}_x}{5 pt}} Q_{\mathcal{U}_x, n} \, d \, \mathcal{U}_x.
\end{equation}
We have 
\begin{equation}
\label{eq:urnreduction2}
\begin{aligned}
|P_k - Q_k |_{TV} & = \left|\sum_{\scaleto{\mathcal{U}_x}{5 pt}} w_{\scaleto{\mathcal{U}_x}{5 pt}} (P_{\mathcal{U}_x, k} - 
Q_{\mathcal{U}_x, k})\right|_{TV}  \\
& \le 
\sum_{\scaleto{\mathcal{U}_x}{5 pt}} w_{\scaleto{\mathcal{U}_x}{5 pt}} |P_{\mathcal{U}_x, k} - 
Q_{\mathcal{U}_x, k}|_{TV} \le \max_{\scaleto{\mathcal{U}_x}{5 pt}} |P_{\mathcal{U}_x, k} - 
Q_{\mathcal{U}_x, k}|_{TV}.
\end{aligned}
\end{equation}
The goal is to bound $|P_{\mathcal{U}_x, k} - Q_{\mathcal{U}_x, k}|_{TV}$ for a given urn $\mathcal{U}_x$.
Note that 
the support of $Q_{\mathcal{U}_x, k}$ is $\mathcal{U}_x^k$;
and the support of $P_{\mathcal{U}_x, k}$ is by picking $k$ elements without replacement from $\mathcal{U}_x$,
which we denote by $\widetilde{\mathcal{U}}_X^k$.
It is clear that $\widetilde{\mathcal{U}}_X^k \subset \mathcal{U}_x^k$,
and $Q_{\mathcal{U}_x, k}(z) \le P_{\mathcal{U}_x, k}(z)$ for any $z \in \widetilde{\mathcal{U}}_X^k$
by \eqref{eq:conddist2} and \eqref{eq:condQ2}.
As a result, 
\begin{align}
\label{eq:TVformula}
|P_{\mathcal{U}_x, k} - Q_{\mathcal{U}_x, k}|_{TV} 
&= 1 - Q_{\mathcal{U}_x, k}(\widetilde{\mathcal{U}}_X^k) \notag \\
&= 1 -\sum_{\sigma(1), \ldots, \sigma(k)} \prod_{i=1}^k \frac{\lambda_i(x_{\sigma(i)})}{\sum_{j = 1}^n \lambda_i(x_j)}  \notag \\
& = \frac{\prod_{i = 1}^k \left( \sum_{j = 1}^n \lambda_i(x_j) \right) - \sum_{\sigma(1), \ldots, \sigma(k)} \prod_{i = 1}^k \lambda_i(x_{\sigma(i)})}{\prod_{i = 1}^k \left( \sum_{j = 1}^n \lambda_i(x_j) \right)},
\end{align}
where the sum $ \sum_{\sigma(1), \ldots, \sigma(k)}$ is over all $k$-tuples picking without replacement from $\mathcal{U}_X$
(or equivalentlly the restriction of $\sigma \in \mathfrak{S}_n$ to $[k]$).
There are $n^k - (n)_k$ terms in the numerator of \eqref{eq:TVformula},
each of which is bounded from above by $\prod_{i = 1}^k \sup_{\mathcal{X}} \lambda_i(x)$;
and there are $n^k$ terms in the denominator of \eqref{eq:TVformula}, 
each of which is bounded from below by $\prod_{i = 1}^k \inf_{\mathcal{X}} \lambda_i(x)$.
So
\begin{equation}
\label{eq:TVest}
|P_{\mathcal{U}_x, k} - Q_{\mathcal{U}_x, k}|_{TV} \le
\left(\prod_{i = 1}^k r_i \right)^{-1} \left( 1 - \frac{n_{(k)}}{n^k} \right),
\end{equation}
where $1 - \frac{n_{(k)}}{n^k}$ is the total variation distance
between sampling $k$ elements uniformly from $[n]$ with and without replacement.
By \cite{Freed77}, we have $1 - \frac{n_{(k)}}{n^k} \le \frac{k(k-1)}{2n}$,
which yields the bound \eqref{eq:boundgen}.
\end{proof}

\quad If $P_n$ is exchangeable, 
the bound \eqref{eq:boundgen} reduces to the Diaconis-Freedman bound (\cite[Theorem 14]{DF80}, \cite[Proposition 1.9]{Kallen05}),
which was shown to be sharp (see \cite[Section 4]{DF80}). 
The condition \eqref{eq:assumpr} is required
so that the bound \eqref{eq:boundgen} is finite. 
The bound \eqref{eq:boundgen} indicates that
the more spread the weights $\lambda_i$'s are (or the smaller the ratios $r_i$'s are), 
the larger the error of approximation $|P_k - Q_k|_{TV}$ may be.
While we don't know if the factor $(\prod_{i = 1}^n r_i)^{-1}$ in the bound is sharp,
the order of error $k^2/n$ is optimal.
As we will see later, 
the factor $(\prod_{i=1}^k r_i)^{-1}$ 
turns out to be crucial in
deriving a de Finetti representation for infinite weighted exchangeable sequences
by passing to the limit.  

\quad Next we consider the special case where $\mathcal{X}$ is finite. 
The following result shows that a better bound $k/n$ can be obtained.

\begin{theorem}
\label{thm:finite}
Assume that $\card(\mathcal{X}) = c < \infty$.
Let $P_n$ be the distribution of a $\lambda$-exchangeable sequence $(X_1, \ldots, X_n)$ on $\mathcal{X}$,
and $P_k$ be the marginal distribution of $(X_1, \ldots, X_k)$ for $k \le n$.
Assume that
\begin{equation}
r_i: = \frac{\min_{\mathcal{X}} \lambda_i(x)}{\max_{\mathcal{X}} \lambda_i(x)} > 0, \quad \mbox{for } i \in [n].
\end{equation}
Then there exists a mixture of weighted i.i.d. distributions $Q_n$ on $\mathcal{X}^n$, 
with $Q_k$ the marginal distribution on its first $k$ coordinates such that
\begin{equation}
\label{eq:boundfinite}
|P_k - Q_k|_{TV} \le 
\frac{ck}{n}\left(\prod_{i = 1}^n r_i \right)^{-2}.
\end{equation}
\end{theorem}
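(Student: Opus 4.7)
The plan is to adapt the framework of Theorem~\ref{thm:general}, refining the main estimate to exploit the finiteness of $\mathcal{X}$. By Proposition~\ref{prop:finiteexrep}(2) we write $P_n = \int w_{\scaleto{\mathcal{U}_x}{5 pt}} P_{\mathcal{U}_x, n}\, d\mathcal{U}_x$ and define $Q_n = \int w_{\scaleto{\mathcal{U}_x}{5 pt}} Q_{\mathcal{U}_x, n}\, d\mathcal{U}_x$ with $Q_{\mathcal{U}_x, n}$ given by \eqref{eq:condQ2}, so that as in \eqref{eq:urnreduction2} it suffices to bound $|P_{\mathcal{U}_x, k} - Q_{\mathcal{U}_x, k}|_{TV}$ for each fixed urn $\mathcal{U}_x$. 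The key refinement for finite $\mathcal{X}$ is to estimate this TV at the value-tuple level in $\mathcal{X}^k$, exploiting the collapse of many index tuples to a common value tuple.

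Let $\mathcal{X} = \{a_1, \ldots, a_c\}$, $m_b := |\{j \in [n]: x_j = a_b\}|$, and $S_i := \sum_b m_b \lambda_i(a_b)$. Introduce the ``bad set''
\[
B := \left\{ y \in \mathcal{X}^k : |\{i : y_i = a_b\}| > m_b \text{ for some } b \in [c] \right\}.
\]
Since $P_{\mathcal{U}_x, k}$ is supported on $\mathcal{X}^k \setminus B$, we have
\[
|P_{\mathcal{U}_x, k} - Q_{\mathcal{U}_x, k}|_{TV} = Q_{\mathcal{U}_x, k}(B) + \sum_{y \notin B}(Q_{\mathcal{U}_x, k}(y) - P_{\mathcal{U}_x, k}(y))_+.
\]
The first term admits a direct estimate: under $Q_{\mathcal{U}_x, k}$ the samples $Z_1, \ldots, Z_k$ are independent with $\Pr[Z_i = a_b] = m_b \lambda_i(a_b)/S_i \le m_b/(nr_i)$, so Markov's inequality gives $\Pr_Q[|\{i: Z_i = a_b\}| > m_b] \le k/(nr_*)$ for each $b$ with $m_b \ge 1$, where $r_* := \min_i r_i$. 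The union bound over the at most $c$ nonempty colors then yields
\[
Q_{\mathcal{U}_x, k}(B) \le \frac{ck}{nr_*} \le \frac{ck}{n}\left(\prod_{i=1}^n r_i\right)^{-1},
\]
using $r_* \ge \prod_i r_i$ (each $r_i \le 1$).

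The principal obstacle is to bound the good-set contribution $\sum_{y \notin B}(Q - P)_+$ by a quantity of order $(ck/n)(\prod r_i)^{-2}$. The plan is to use the explicit formulas \eqref{eq:conddist2} and \eqref{eq:condQ2} to compare $P(y)$ and $Q(y)$ on the good set via a permanent-ratio argument, in the spirit of the derivation of \eqref{eq:TVest}, showing that on $\mathcal{X}^k \setminus B$ the difference $|P(y) - Q(y)|$ is governed by the same value-count imbalance that drives $Q(B)$, picking up an additional weight factor of at most $(\prod_{i=1}^n r_i)^{-1}$. The delicate point is that a naive two-sided ratio bound $P(y)/Q(y) \in [\prod_i r_i,\ (\prod_i r_i)^{-1}]$ would only yield $|P - Q|_{TV} \le Q(B) + (1 - \prod r_i)$, which is not of order $k/n$; a finer pointwise comparison, or alternatively a reduction to the unweighted finite-state Diaconis--Freedman bound $|P^{(0)}_{\mathcal{U}_x, k} - Q^{(0)}_{\mathcal{U}_x, k}|_{TV} \le ck/n$ via a suitable change of measure introducing a single $(\prod r_i)^{-1}$ factor, is required to produce the correct $O(k/n)$ order in the good-set contribution and thereby establish \eqref{eq:boundfinite}.
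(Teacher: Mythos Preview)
Your proposal is incomplete: the ``good-set'' contribution $\sum_{y\notin B}(Q_{\mathcal{U}_x,k}(y)-P_{\mathcal{U}_x,k}(y))_+$ is never bounded, only discussed. You correctly identify that a naive two-sided ratio bound would lose the $k/n$ order, but you do not supply the finer pointwise comparison that recovers it. As written, the argument stops exactly where the real work begins.

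There is also a structural inefficiency in your decomposition. By writing $|P-Q|_{TV}=\sum_y (Q(y)-P(y))_+$ you are forced to control both $Q(B)$ and the good-set excess of $Q$ over $P$. The paper instead uses the other representation,
\[
|P_{\mathcal{U}_x,k}-Q_{\mathcal{U}_x,k}|_{TV}=\sum_z\bigl(P_{\mathcal{U}_x,k}(z)-Q_{\mathcal{U}_x,k}(z)\bigr)_+,
\]
which is automatically supported on the good set (since $P_{\mathcal{U}_x,k}$ vanishes on $B$), so the bad-set term simply does not arise. Everything reduces to a single lower bound on $Q/P$ over the support of $P$.

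That lower bound is obtained in the paper by the pointwise comparison
\[
\frac{Q_{\mathcal{U}_x,k}(z)}{P_{\mathcal{U}_x,k}(z)}-1\ \ge\ \Bigl(\prod_{i=1}^n r_i\Bigr)^{-1}\Bigl(\frac{Q_{o,k}(z)}{P_{o,k}(z)}-1\Bigr),
\]
where $P_{o,k},Q_{o,k}$ are the \emph{unweighted} urn distributions (without and with replacement). This is the ``change of measure introducing a single $(\prod r_i)^{-1}$ factor'' you allude to but do not carry out. One then invokes Diaconis--Freedman's Lemma~6, $Q_{o,k}(z)\ge\prod_{j=1}^c(1-\nu_j/n_j)P_{o,k}(z)$, to get
\[
P_{\mathcal{U}_x,k}(z)-Q_{\mathcal{U}_x,k}(z)\ \le\ \Bigl(\prod_{i=1}^n r_i\Bigr)^{-1}\Bigl(\sum_{j=1}^c\frac{\nu_j}{n_j}\Bigr)P_{\mathcal{U}_x,k}(z),
\]
and a second change of measure, $P_{\mathcal{U}_x,k}(z)\le(\prod_i r_i)^{-1}P_{o,k}(z)$, converts the sum into $\sum_j E_{P_{o,k}}[\nu_j]/n_j=\sum_j k/n=ck/n$. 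This yields the two factors of $(\prod r_i)^{-1}$ and the bound \eqref{eq:boundfinite}. Your Markov/union-bound estimate of $Q(B)$ is correct but, with the right orientation of the TV formula, unnecessary.
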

\begin{proof}
We follow the notations as in the proof of Theorem \ref{thm:general}, 
and define $Q_n$ as in \eqref{eq:Qdef}.
By \eqref{eq:conddist2} and \eqref{eq:condQ2},
we have for a given $k$-tuple $z \in \mathcal{U}_X^k$:
\begin{equation}
\label{eq:estPQ}
\frac{Q_{\mathcal{U}_x, k}(z)}{P_{\mathcal{U}_x, k}(z)} - 1  \ge  \left(\prod_{i = 1}^n r_i \right)^{-1} \left(\frac{Q_{o,k}(z)}{P_{o,k}(z)} - 1 \right),
\end{equation}
where $P_{o,k}$ (resp. $Q_{o,k}$) denotes the $k$-marginal distribution of 
uniform sampling without replacement (resp. with replacement).

\quad Assume without loss of generality that $\mathcal{X} = \{1,\ldots, c\}$.
Let $n_j$ be the number of $j$'s in $\mathcal{U}_x$,
and $\nu_j$ be the number of $j$'s in the $k$-tuple $z$. 
It follows from \cite[Lemma 6]{DF80} that
$Q_{o,k}(z) \ge \prod_{j = 1}^c \left(1 - \frac{\nu_j}{n_j} \right) P_{o,k}(z)$.
Combining with \eqref{eq:estPQ}, we get
\begin{equation*}
Q_{\mathcal{U}_x, k}(z) \ge P_{\mathcal{U}_x, k}(z) -  \left(\prod_{i = 1}^n r_i \right)^{-1} \left(\sum_{j = 1}^c \frac{\nu_j}{n_j}\right)
P_{\mathcal{U}_x, k}(z).
\end{equation*}
Consequently, 
\begin{equation}
\label{eq:estPQ2}
\begin{aligned}
|P_{\mathcal{U}_x, k} - Q_{\mathcal{U}_x, k}|_{TV}
& \le \left(\prod_{i = 1}^n r_i \right)^{-1} \sum_z \sum_{j = 1}^ c  \frac{\nu_j}{n_j} P_{\mathcal{U}_x, k}(z) \\
& \le  \left(\prod_{i = 1}^n r_i \right)^{-2}  \sum_{j = 1}^ c \underbrace{\sum_z \frac{\nu_j}{n_j} P_{o,k}(z)}_{= k/n}
= \frac{ck}{n}\left(\prod_{i = 1}^n r_i \right)^{-2},
\end{aligned}
\end{equation}
where the first inequality follows from $|P_{\mathcal{U}_x, k} - Q_{\mathcal{U}_x, k}|_{TV} = \sum_{z} (P_{\mathcal{U}_x, k} - Q_{\mathcal{U}_x, k})^{-}$,
and the second inequality is due to
$P_{\mathcal{U}_x, k}(z) \le (\prod_{i=1}^n r_i)^{-1} P_{o,k}(z)$.
Combining \eqref{eq:urnreduction2} and \eqref{eq:estPQ2} yields the bound \eqref{eq:boundfinite}.
\end{proof}

\quad Again if $P_n$ is exchangeable, then $\lambda_i \equiv 1$ 
and $r_i = 1$ for all $i$.
The bound \eqref{eq:boundfinite} reduces to the Diaconis-Freedman bound (\cite[Theorem 3]{DF80}), 
which is sharp.

\quad As a consequence of the above theorems, 
we derive de Finetti's theorem for infinite weighted exchangeable sequences,
which is an analog to the Hewitt-Savage theorem \cite[Theorem 7.2]{HS55}.

\begin{theorem}
\label{thm:infinite}
Let $P$ be the distribution of a $\lambda$-exchangeable sequence $(X_1, X_2, \ldots)$ on $\mathcal{X}^{\infty}$.
Assume that
\begin{equation}
\label{eq:condratio}
r_i: = \frac{\inf_{\mathcal{X}} \lambda_i(x)}{\sup_{\mathcal{X}} \lambda_i(x)} > 0, \, \, \mbox{for } i =1,2,\ldots,
\quad \mbox{and} \quad \sum_{i = 1}^{\infty} (1- r_i) < \infty.
\end{equation}
Then $P$ is a mixture of weighted i.i.d. distributions on $\mathcal{X}^{\infty}$.
\end{theorem}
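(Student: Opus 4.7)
The plan is to pass to the limit in Theorem \ref{thm:general} as $n\to\infty$. The role of the summability hypothesis $\sum_i (1-r_i) < \infty$ is that it is equivalent (via $-\log r \sim 1-r$ at $r=1$) to $R_\infty := \prod_{i=1}^\infty r_i > 0$, so the prefactor $\bigl(\prod_{i=1}^k r_i\bigr)^{-1} \le R_\infty^{-1}$ in the bound \eqref{eq:boundgen} stays uniformly bounded, independently of $k$. Consequently, for each fixed $k$,
$$|P_k - Q_{n,k}|_{TV} \;\le\; \frac{k(k-1)}{2\,n\,R_\infty} \;\longrightarrow\; 0 \quad (n\to\infty),$$
so every finite-dimensional marginal of $P$ is the total-variation limit of (marginals of) weighted i.i.d. mixtures.

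Next, I would encode each $Q_n$ through its mixing measure. For $\pi \in \mathcal{P}(\mathcal{X})$, set
$$\mathcal{I}_k(\pi) \;:=\; \bigotimes_{i=1}^k \frac{\lambda_i\, d\pi}{\int \lambda_i\, d\pi} \;\in\; \mathcal{P}(\mathcal{X}^k).$$
Inspecting the proof of Theorem \ref{thm:general}, one sees that $Q_n = \int_{\mathcal{P}(\mathcal{X})} \mathcal{I}_n(\pi)\, d\mu_n(\pi)$, where $\mu_n$ is the law under $P$ of the random empirical measure $\widehat{P}_n = \frac{1}{n}\sum_{i=1}^n \delta_{X_i}$, regarded as an element of $\mathcal{P}(\mathcal{P}(\mathcal{X}))$. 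To produce a single mixing measure valid for \emph{all} $k$ simultaneously, I would extract a subsequential weak limit $\mu_{n_j} \rightharpoonup \mu$ via Prokhorov's theorem; tightness of $(\mu_n)$ in $\mathcal{P}(\mathcal{P}(\mathcal{X}))$ reduces to tightness of the barycenters $E[\widehat{P}_n] = \frac{1}{n}\sum_{i=1}^n \mathrm{Law}(X_i)$ in $\mathcal{P}(\mathcal{X})$, which can be established using the uniform control on the $\lambda_i$'s implicit in $r_i \ge R_\infty > 0$ and the Polish structure of $\mathcal{X}$. Defining
$$Q \;:=\; \int_{\mathcal{P}(\mathcal{X})} \bigotimes_{i=1}^\infty \frac{\lambda_i\, d\pi}{\int \lambda_i\, d\pi}\, d\mu(\pi)$$
on $\mathcal{X}^\infty$, Kolmogorov consistency reduces the theorem to the claim $Q_k = P_k$ for every $k$: the estimate above gives $Q_{n_j,k} \to P_k$ in total variation, while a continuity argument should give $Q_{n_j,k} \rightharpoonup Q_k$, and uniqueness of limits then forces $Q_k = P_k$.

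The main obstacle is this last continuity step. One needs the map $\pi \mapsto \mathcal{I}_k(\pi)$ to be weakly continuous from $\mathcal{P}(\mathcal{X})$ to $\mathcal{P}(\mathcal{X}^k)$, or at least to commute with integration against the weak limit of the $\mu_{n_j}$. Since each $\lambda_i$ is only assumed Borel measurable, weak convergence $\pi_n \rightharpoonup \pi$ does not by itself force $\int \lambda_i\, d\pi_n \to \int \lambda_i\, d\pi$. A plausible remedy is to approximate each $\lambda_i$ in $L^1$ by bounded continuous functions against a common dominating reference measure on $\mathcal{X}$ (built from the tight family of barycenters), and then to propagate the resulting $L^1$-error through the finite tensor product; the uniform two-sided bounds on $\lambda_i$ implicit in $r_i > 0$ keep the normalizing constants $\int \lambda_i\, d\pi$ away from both $0$ and $\infty$, so the error propagation remains quantitative. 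Once this continuity is in hand, the remaining verifications, namely Kolmogorov consistency and that the resulting $Q$ is indeed a weighted i.i.d. mixture, are routine.
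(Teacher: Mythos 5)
Your proposal follows essentially the same route as the paper: apply Theorem \ref{thm:general}, use the equivalence of $\sum_i(1-r_i)<\infty$ with $\prod_i r_i>0$ to make the bound \eqref{eq:boundgen} vanish as $n\to\infty$ for fixed $k$, extract a limiting mixing measure by compactness of $\mathcal{P}(\mathcal{P}(\mathcal{X}))$ (the paper first takes $\mathcal{X}$ compact and then extends to Polish spaces via Ressel), and identify the limit with $P_k$ via the total-variation convergence. The continuity obstacle you flag --- that $\pi\mapsto\bigotimes_i \lambda_i\,d\pi/\int\lambda_i\,d\pi$ need not be weak-star continuous for merely measurable $\lambda_i$ --- is a real subtlety, but the paper's own proof passes over it just as quickly (asserting that the weak-star limit $Q_k^*$ is the $k$-marginal of a weighted i.i.d.\ mixture, with the details deferred to the cited references), so your attempt is at least as complete as the argument given in the text.
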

\begin{proof}
The proof follows from \cite[Theorem 14]{DF80} and Theorem \ref{thm:general}.
First assume that $\mathcal{X}$ is compact.
By Theorem \ref{thm:general}, for each $n$,
there exists a mixture of weighted i.i.d distribution $Q_n$ on $\mathcal{X}^n$ such that
\begin{equation}
\label{eq:reest}
|P_k - Q_{n,k}|_{TV} \le \frac{k(k-1)}{2n} \left( \prod_{i = 1}^k r_i\right)^{-1},
\end{equation}
where $Q_{n,k}$ denotes the $k$-marginal distribution of $Q_n$.
Denote by $(\mathcal{X}^*, \mathcal{B}^*)$ the set of distributions on $(\mathcal{X}, \mathcal{B})$,
and $(\mathcal{X}^{**}, \mathcal{B}^{**})$ the set of distributions on $(\mathcal{X}^*, \mathcal{B}^*)$.
By the compactness of $(\mathcal{X}^{**}, \mathcal{B}^{**})$,
$Q_{n,k}$ converges in the weak-star topology to some $Q^*_k$,
which is the $k$-marginal of a mixture of weighted i.i.d. distributions $Q^*$ on $\mathcal{X}^{\infty}$. 

\quad It is known that the condition \eqref{eq:condratio} is equivalent to 
$\prod_{i = 1}^\infty r_i > 0$. 
Combining with \eqref{eq:reest}, 
we get the convergence of $Q_{n,k}$ to $P_k$ in total variation. 
As a result, $P_k = Q^*_k$ for all $k$, and $P = Q^*$.
By a routine argument in \cite{Ressel85}, 
the assumption that $\mathcal{X}$ is compact
can be extended to Borel subsets of compact spaces, 
including Polish spaces. 
This completes the proof.
\end{proof}

\quad As mentioned in the introduction (see \eqref{eq:gencond0}),
\cite[Theorem 6]{BC23} showed that if
\begin{equation}
\label{eq:BCcond}
\sum_{i =1}^{\infty} r_i = \infty,
\end{equation}
then $P$ is a mixture of weighted i.i.d. distributions. 
Note that the condition \eqref{eq:condratio} is more restrictive than \eqref{eq:BCcond}.
Essentially, it implies that an infinite ``nearly'' exchangeable sequence
is a mixture of weighted i.i.d. sequences. 
This calls for an improvement of the factors 
in \eqref{eq:boundfinite} and \eqref{eq:boundgen}
so that de Finetti's representation for infinite weighted i.i.d. sequences
can be derived from finite approximations
under broader sufficient (and possibly necessary) conditions on $r_i$'s or $\lambda_i$'s.
One idea is to use the multivariate polynomial approach
as in \cite{Bob05, CFV23}.
Finally, the assumption that $\mathcal{X}$ is a Polish space may be relaxed. 
But without any topological assumptions on $\mathcal{X}$,
de Finetti's theorem for exchangeable sequences even fails \cite{DF79, Free80}. 

\bigskip
{\bf Acknowledgement:} 
W.\ Tang gratefully acknowledges financial support through NSF grants DMS-2113779 and DMS-2206038,
and through a start-up grant at Columbia University.

\bibliographystyle{abbrv}
\bibliography{unique}

\begin{thebibliography}{10}

\bibitem{Aldous85}
D.~J. Aldous.
\newblock Exchangeability and related topics.
\newblock In {\em \'{E}cole d'\'{e}t\'{e} de probabilit\'{e}s de
  {S}aint-{F}lour, {XIII}---1983}, volume 1117 of {\em Lecture Notes in Math.},
  pages 1--198. Springer, Berlin, 1985.

\bibitem{SA53}
E.~S. Andersen.
\newblock On sums of symmetrically dependent random variables.
\newblock {\em Skand. Aktuarietidskr.}, 36:123--138, 1953.

\bibitem{BC23}
R.~F. Barber, E.~J. Cand\`es, A.~Ramdas, and R.~J. Tibshirani.
\newblock De {F}inetti's theorem and related results for infinite weighted
  exchangeable sequences.
\newblock 2023.
\newblock arXiv:2304.03927.

\bibitem{BC09}
P.~J. Bickel and A.~Chen.
\newblock A nonparametric view of network models and {N}ewman--{G}irvan and
  other modularities.
\newblock {\em PNAS}, 106(50):21068--21073, 2009.

\bibitem{Bob05}
S.~G. Bobkov.
\newblock Generalized symmetric polynomials and an approximate de {F}inetti
  representation.
\newblock {\em J. Theoret. Probab.}, 18(2):399--412, 2005.

\bibitem{CCB18}
T.~Campbell, D.~Cai, and T.~Broderick.
\newblock Exchangeable trait allocations.
\newblock {\em Electron. J. Stat.}, 12(2):2290--2322, 2018.

\bibitem{CFV23}
G.~Carlier, G.~Friesecke, and D.~V\"{o}gler.
\newblock Convex geometry of finite exchangeable laws and de {F}inetti style
  representation with universal correlated corrections.
\newblock {\em Probab. Theory Related Fields}, 185(1-2):311--351, 2023.

\bibitem{deF29}
B.~De~Finetti.
\newblock Funzione caratteristica di un fenomeno aleatorio.
\newblock In {\em Atti del Congresso Internazionale dei Matematici}, pages
  179--190, 1929.

\bibitem{DF80}
P.~Diaconis and D.~Freedman.
\newblock Finite exchangeable sequences.
\newblock {\em Ann. Probab.}, 8(4):745--764, 1980.

\bibitem{DJ08}
P.~Diaconis and S.~Janson.
\newblock Graph limits and exchangeable random graphs.
\newblock {\em Rend. Mat. Appl. (7)}, 28(1):33--61, 2008.

\bibitem{DF79}
L.~E. Dubins and D.~A. Freedman.
\newblock Exchangeable processes need not be mixtures of independent,
  identically distributed random variables.
\newblock {\em Z. Wahrsch. Verw. Gebiete}, 48(2):115--132, 1979.

\bibitem{Freed77}
D.~Freedman.
\newblock A remark on the difference between sampling with and without
  replacement.
\newblock {\em J. Amer. Statist. Assoc.}, 72(359):681, 1977.

\bibitem{Free80}
D.~A. Freedman.
\newblock A mixture of independent identically distributed random variables
  need not admit a regular conditional probability given the exchangeable
  {$\sigma $}-field.
\newblock {\em Z. Wahrsch. Verw. Gebiete}, 51(3):239--248, 1980.

\bibitem{HS55}
E.~Hewitt and L.~J. Savage.
\newblock Symmetric measures on {C}artesian products.
\newblock {\em Trans. Amer. Math. Soc.}, 80:470--501, 1955.

\bibitem{HW23}
C.~C. Holmes and S.~G. Walker.
\newblock Statistical inference with exchangeability and martingales.
\newblock {\em Philos. Trans. Roy. Soc. A}, 381(2247):Paper No. 143, 17, 2023.

\bibitem{Kallen05}
O.~Kallenberg.
\newblock {\em Probabilistic symmetries and invariance principles}.
\newblock Probability and its Applications. Springer, New York, 2005.

\bibitem{KD18}
I.~Korshunova, J.~Degrave, F.~Husz{\'a}r, Y.~Gal, A.~Gretton, and J.~Dambre.
\newblock Bruno: A deep recurrent model for exchangeable data.
\newblock In {\em Neurips}, volume~31, 2018.

\bibitem{Lau88}
S.~L. Lauritzen.
\newblock {\em Extremal families and systems of sufficient statistics},
  volume~49 of {\em Lecture Notes in Statistics}.
\newblock Springer-Verlag, New York, 1988.

\bibitem{OR14}
P.~Orbanz and D.~M. Roy.
\newblock Bayesian models of graphs, arrays and other exchangeable random
  structures.
\newblock {\em IEEE Trans. Pattern Anal. Mach. Intell.}, 37(2):437--461, 2014.

\bibitem{Pit95}
J.~Pitman.
\newblock Exchangeable and partially exchangeable random partitions.
\newblock {\em Probab. Theory Related Fields}, 102(2):145--158, 1995.

\bibitem{PT23}
J.~Pitman and W.~Tang.
\newblock Extreme order statistics of random walks.
\newblock {\em Ann. Inst. Henri Poincar\'{e} Probab. Stat.}, 59(1):97--116,
  2023.

\bibitem{Ressel85}
P.~Ressel.
\newblock De {F}inetti-type theorems: an analytical approach.
\newblock {\em Ann. Probab.}, 13(3):898--922, 1985.

\bibitem{Rubin78}
D.~B. Rubin.
\newblock Bayesian inference for causal effects: the role of randomization.
\newblock {\em Ann. Statist.}, 6(1):34--58, 1978.

\bibitem{TR19}
R.~J. Tibshirani, R.~Foygel~Barber, E.~J. Cand\`es, and A.~Ramdas.
\newblock Conformal prediction under covariate shift.
\newblock In {\em Neurips}, volume~32, 2019.

\end{thebibliography}
\end{document}